\documentclass[11pt,letterpaper]{article}

\usepackage[T2A,OT1]{fontenc}
\usepackage[utf8]{inputenc}
\usepackage[russian,english]{babel}
\usepackage[textwidth=360pt,textheight=584pt,centering]{geometry}
\usepackage{amsmath,amssymb,amsthm,mathtools}
\usepackage[hidelinks]{hyperref}
\theoremstyle{plain}
\newtheorem{theorem}{Theorem}[section]
\newtheorem{lemma}[theorem]{Lemma}
\newtheorem{proposition}[theorem]{Proposition}

\theoremstyle{definition}
\newtheorem{definition}[theorem]{Definition}

\theoremstyle{remark}

\title{On omega-powers of infinite difference rank}
\author{M. Vishnikin}
\date{\today}

\begin{document}

\maketitle

\begin{abstract}
It is proved that there exists a language $L$ such that its $\omega$-power has infinite rank in the difference hierarchy of open sets. The same holds for the decreasing difference hierarchy of $\Pi^0_2$ sets. This answers the questions asked in \cite[Section~4]{FL24} and \cite[Section~4]{FGL26}.
\end{abstract}

\section{Introduction}
Regular languages of $\omega$-words play an important role in theoretical computer science. They were introduced by B\"uchi to prove the decidability of the monadic second-order theory of $(\mathbb{N},S)$, where $S$ is the successor relation~\cite{Buc62}.

It is known that $\omega$-languages accepted by Muller automata are naturally ordered in a hierarchy called the Wagner hierarchy~\cite{Wag79}. This hierarchy is related to descriptive set theory and coincides with the Wadge hierarchy restricted to this class of languages.

Nevertheless, the strict subclass of regular $\omega$-languages obtained from regular languages of finite words by taking their $\omega$-powers is also of interest. Namely, let $L$ be a language, that is, a subset of $\Sigma^*$, the set of all finite words over the finite alphabet $\Sigma$. We can then consider the language $L^\omega$, defined as the set of $\omega$-words (that is, words whose letters are indexed by the ordinal $\omega$) formed by concatenating words belonging to $L$. We use the notation $L^\omega$, as in~\cite{FGL26}, although the notation $L^{\infty}$ is also used, for example in~\cite{FL24}.

This paper answers questions posed in~\cite[Section~4]{FL24} and~\cite[Section~4]{FGL26}, where the $\Delta^0_2$ and $\Delta^0_3$ levels of $\omega$-powers of regular languages were studied. Namely, we show how to construct two $\omega$-powers in the classes $\Delta^0_2$ and $\Delta^0_3$ that do not belong to any finite level of the difference hierarchy over $\Sigma^0_1$ and the decreasing difference hierarchy over $\Pi^0_2$, respectively.

The constructed $\omega$-powers are not regular languages. Indeed, its well known that every regular $\omega$-language in $\Delta^0_2$ belongs to a finite level of the difference hierarchy over $\Sigma^0_1$, and every regular $\omega$-language belongs to a finite level of the decreasing difference hierarchy over $\Pi^0_2$. Consequently, the underlying finitary languages are also non-regular, since the $\omega$-power of a regular finitary language is regular.

Thus, the main results of this paper are the following theorems.

\begin{theorem}\label{thm:main2}
There exists a language $L \subseteq\{0,1\}^*$ such that $L^\omega \in \Delta^0_2$ and $L^\omega \notin \bigcup_{n\in \omega} D_n(\Sigma^0_1)$.
\end{theorem}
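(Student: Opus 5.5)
The plan is to build $L^\omega$ as a disjoint union, over a clopen partition of (most of) $\{0,1\}^\omega$, of pieces $A_n$ where $A_n$ sits at level roughly $n$ of the difference hierarchy. Then $L^\omega$ cannot lie in any single $D_k(\Sigma^0_1)$, while it stays $\Delta^0_2$. The key descriptive-set-theoretic tool is the Hausdorff--Kuratowski alternation criterion. A set $A\in\Delta^0_2$ fails to be in $D_k(\Sigma^0_1)$ whenever there are points $x_0,x_1,\dots,x_k$ with $x_i\in A$ iff $i$ is even (or odd), and each $x_i$ is a limit of points of the same type as $x_{i+1}$ along a nested system of cones. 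Equivalently, the iterated derivative $F\mapsto \overline{F\cap A}\cap\overline{F\setminus A}$, starting from the whole space, does not vanish after $k$ steps. I would use this in the form: \emph{if for every $k$ some cone $[u]$ admits a $k$-alternating chain, then $A\notin\bigcup_k D_k(\Sigma^0_1)$}. The membership $A\in\Delta^0_2$ will come from the fact that, restricted to each cone $[u_n]$ of a clopen partition, $A$ is a finite Boolean combination of open sets, plus a separate check at the finitely many or countably many limit points not covered by the partition.

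Concretely, I would first encode a larger alphabet into $\{0,1\}$ by a prefix code, say $a=01$, $b=011$, $c=0111$, and so on. I would reserve a letter $\#_n$ announcing ``level $n$''. The target piece is a set like
\[
B_n=\{y : \text{the number of occurrences of } b \text{ in } y \text{ is } >n \text{ or odd}\},
\]
which is in $D_{n+1}(\Sigma^0_1)$ but not in $D_{n}(\Sigma^0_1)$. Indeed, the chain $a^\omega,\ b a^\omega,\ bba^\omega,\dots$ alternates $n$ times inside the cone, each term being a limit of the next type.

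I then want $L$ to consist of words of the form $\#_n\, a^{i_1} b a^{i_2} b\cdots$ whose structure forces every factorisation of an $\omega$-word starting with $\#_n$ to remember the count of $b$'s seen so far. One way is to let the words of $L$ following $\#_n$ carry a counter in their length or prefix, for instance a block $c^{j}$ recording ``this is the $j$-th $b$''. Then $x\in L^\omega$ iff the counter sequence is consistent and reaches an allowed state, and words with $>n$ occurrences are accepted via a word in $L$ that may be followed by an arbitrary tail of words in $L$ (e.g. include all of $\{a,b\}$ as single-letter members after an ``unlock'' marker). Finally I take $A=\bigcup_n \#_n B_n'$, where $B_n'$ is the encoded version, together with a harmless decision on sequences containing no marker.

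The key steps, in order, are as follows. (1) Fix the prefix code and the clopen partition by markers. (2) Define $L$ and prove the decomposition lemma: for $x$ beginning with $\#_n$, $x\in L^\omega$ iff the decoded word lies in $B_n$. (3) Deduce $L^\omega\cap[\#_n]$ is in $D_{n+1}(\Sigma^0_1)$ uniformly, and handle the complement of the marker cones. Points with no marker or ill-formed code should form a closed set on which $L^\omega$ is trivially clopen-behaved, giving $L^\omega\in\Delta^0_2$. (4) Exhibit the alternating chains inside $[\#_n]$ to get $L^\omega\notin D_n(\Sigma^0_1)$ for each $n$.

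The main obstacle is step (2). An $\omega$-power is closed under unintended refactorisations: an infinite word might be split into members of $L$ in a way that ``resets'' the counter or skips the marker, producing spurious members of $L^\omega$ that would collapse the hierarchy level. I would first try to make $L$ a prefix code, with markers that cannot occur inside a word of $L$ except at its start, so that factorisations are unique. The alternation count is then carried by the explicit counter blocks $c^j$, whose consistency ($j$ increasing by one) is a local condition checkable word by word. If uniqueness of factorisation cannot be arranged together with the unlocking mechanism, the fallback is to let the acceptance condition be monotone in factorisations, i.e. an open-type condition. I would then verify directly that any factorisation yields membership exactly when the count condition holds.
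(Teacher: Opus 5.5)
There is a genuine gap, and it is exactly where you locate it. Step (2) is never carried out, and the mechanisms you sketch for it conflict with how $\omega$-powers behave. In $L^\omega$ every sequence of words of $L$ concatenates to a member, so $L^*L^\omega\subseteq L^\omega$ and each factor is tested independently of its neighbours. This has three consequences.

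(i) A constraint linking consecutive factors, such as the counter $c^j$ increasing by one, is not ``checkable word by word''. It cannot be enforced at all: simply repeating one factor already violates it, yet the result is in $L^\omega$.

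(ii) A marker $\#_n$ at the start of the first factor tells later factors nothing. So ``single letters allowed after an unlock marker'' is not something $L$ can express.

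(iii) Once the coded letters $a,b$ are themselves in $L$, the target $\#_n B_n$ is unreachable. Work at the level of coded letters. Since $\#_n a^\omega\notin L^\omega$, no $\#_n a^k$ lies in $L^*$. Then $\#_n a^i b a^\omega\in L^\omega$ forces some $\#_n a^i b a^k\in L$. But then $(\#_n a^i b a^k)(b)(a)(a)\cdots\in L^\omega$ has two $b$'s.

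Your fallback, an ``open-type'' acceptance condition, would make $L^\omega\cap[\#_n]$ open, i.e.\ of rank $1$, which defeats the purpose. Two minor slips: $\{01,011,0111,\dots\}$ is not a prefix code, and the exact level of $B_n$ depends on the parity of $n$. Your steps (3) and (4) would be fine, but only if (2) existed.

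The missing idea is that you need not build the high-rank pieces by hand; that is precisely the hard content of Theorem~\ref{thm:FL24}. The paper takes regular $L_i$ with $L_i^\omega$ $D_i(\Sigma^0_1)$-complete and codes them by injective homomorphisms $h_i(0)=10^{2i-1}1$, $h_i(1)=10^{2i}1$. Here the index $i$ is carried by \emph{every} coded letter rather than by a one-time marker. So any factorisation of a word of $h_i(2^\omega)$ into words of $A$ is forced to be a factorisation into words of $h_i(L_i)$. This gives $h_i^{-1}(A^\omega)=L_i^\omega$, and the lower bound follows by continuous preimage.

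For $\Delta^0_2$ the paper does not use a clopen partition. Instead it adds to $A$ every word that is not a prefix of any coded infinite word. This places the whole open complement of the closed set $\bigcup_i h_i(2^\omega)\cup\{10^\omega\}$ inside $A^\omega$. Then both $A^\omega$ and its complement are countable unions of $\Sigma^0_2$ sets.
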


\begin{theorem}\label{thm:main3}
There exists a language $L \subseteq\{0,1\}^*$ such that $L^\omega \in \Delta^0_3$ and $L^\omega \notin \bigcup_{n\in \omega} D^\downarrow_n(\Pi^0_2)$.
\end{theorem}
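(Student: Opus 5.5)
The plan is to build $L^\omega$ as a ``disjoint sum'' of $\omega$-powers $M_n^\omega$, where $M_n^\omega$ is hard for the $n$-th level. Since the hierarchy is preserved under restriction to clopen sets, such a sum cannot sit in any single level. I would first fix, for each $n$, a language $M_n\subseteq\{0,1\}^*$ such that $M_n^\omega\in\Delta^0_3$ and $M_n^\omega\notin D^\downarrow_n(\Pi^0_2)$. Since every regular $\omega$-language lies in a finite level, I expect these finite levels to be realized by $\omega$-powers of regular languages, as studied in \cite{FGL26}. If they are not explicitly available there, I would construct them directly. Take $M_n^\omega$ to be the set of words in which some parameter (for instance, the number of distinct ``block types'' occurring infinitely often, or the eventual value of an alternation counter) takes a value with prescribed parity and is at most $n$. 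Then verify hardness by exhibiting a continuous reduction of the standard complete set of the level, using the alternating-chain characterization of $D^\downarrow_n(\Pi^0_2)$: a Hausdorff--Kuratowski type argument with $n+1$ nested $\Pi^0_2$ ``layers'' on which membership alternates.

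Next I would glue the pieces using a prefix-free coding that separates sectors. In sector $n$, encode the letters $0,1$ by the blocks $c_n(0)=1^{2n}0$ and $c_n(1)=1^{2n+1}0$, so that run lengths of $1$'s determine the sector and the decoding is unique. Set $L=\bigcup_n c_n(M_n)$, possibly together with a few auxiliary ``junk'' words handling words that switch sectors. For $\omega$-words decomposing into blocks of a single sector $n$ from some point on, membership in $L^\omega$ is then governed by $c_n(M_n^\omega)$ on the tail. The clopen set $U_n$ of words beginning with $c_n(0)$ and staying in sector $n$ is not clopen by itself. Instead I would use the continuous map $x\mapsto c_n(x)$, which reduces $M_n^\omega$ to $L^\omega$, provided mixed factorizations cannot rescue a pure-sector word. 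This is exactly what the unique run-length decoding guarantees. Since each level $D^\downarrow_k(\Pi^0_2)$ is closed under continuous preimages, $L^\omega\in D^\downarrow_k$ would give $M_n^\omega\in D^\downarrow_k$ for every $n$, which contradicts the choice of $M_n$ for $n\ge k$.

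The remaining task, and the step I expect to be the main obstacle, is showing $L^\omega\in\Delta^0_3$ despite the mixing of sectors. A word either eventually stays in one sector, which is a $\Sigma^0_2$ condition, or switches sectors infinitely often (or consists of runs that are malformed or of unbounded length), which is a $\Pi^0_2$ condition. On the first part, $L^\omega$ is a countable union over $n$ and over the switching position of sets of the form $\{$prefix condition$\}\cap$(tail in $c_n(M_n^\omega)$). Each of these is $\Delta^0_3$, but a countable union of $\Delta^0_3$ sets is only $\Sigma^0_3$. I would therefore arrange $L$ with enough junk words that every word with infinitely many switches lies in $L^\omega$. Then the complement of $L^\omega$ is contained in the $\Sigma^0_2$ set of eventually-single-sector words, and there it equals the union of the complements of the tails of $c_n(M_n^\omega)$. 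I would then check that both $L^\omega$ and its complement are $\Sigma^0_3$ by writing each as a $\Sigma^0_2$ disjoint decomposition indexed by sector and switching point. The delicate point is to make the junk words produce no spurious factorizations of pure-sector words. I would try to handle this by letting junk words be exactly the finite words containing two runs from different sectors, so that any factorization using them forces an actual sector switch.
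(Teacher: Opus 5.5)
\textbf{There is a genuine gap, in the $\Delta^0_3$ step, which is the step you yourself flag as the main obstacle.} Your overall architecture matches the paper's: hard $\omega$-powers from Finkel--Girardot--Lecomte, a prefix code separating sectors, extra junk words, and the coding map as a continuous reduction. But what you propose does not resolve that step.

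\emph{The quantifier over cut points.} Take junk $=$ ``words containing two runs from different sectors''. A word whose last switch is at $p$ and which stays in sector $n$ afterwards can be factored by letting the junk factor that covers the switch end at \emph{any} later block boundary $q$. So membership of such a word involves an existential quantifier over cut points, at least $\exists q\,(\text{tail from } q\in c_n(M_n^\omega))$. Its complement therefore involves a universal one: a countable \emph{intersection} over $q$ of $\Delta^0_3$ sets, not ``the union of the complements of the tails''. Indexing by sector and switching point does not remove this quantifier, so you get no $\Sigma^0_3$ bound on $2^\omega\setminus L^\omega$.

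\emph{The junk set is underdetermined in a way that matters.} If an unterminated trailing block of $1$'s counts as a run, a pure word $c_n(y)$ can be cut mid-run into junk factors. Then $c_n^{-1}(L^\omega)=2^\omega$ for $n\ge 2$, and the reduction collapses. If only complete runs $1^k0$ count, then after one switch you can keep cutting mid-run so that each factor has the form $1^a0\,1^r0\,1^b$, where $1^a0$ is a truncated run of a smaller sector. So essentially every word with a switch lies in $L^\omega$ whatever its tail. That contradicts the tail-dependent picture your complexity argument is built on.

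\textbf{The missing idea: make the junk set as large as possible while keeping pure code words safe.} The paper takes as junk all finite words that are not prefixes of any infinite pure code word $h_i(y)$. This has two consequences.
\begin{itemize}
\item In a pure code word, every factor starts at a block boundary and is such a prefix. It must therefore be a genuine code word of the right sector, which gives $h_i^{-1}(A^\omega)=L_i^\omega$.
\item Every infinite word that is not a pure code word from position $0$ lies in $A^\omega$, except $10^\omega$. Hence the complement is exactly $2^\omega\setminus A^\omega=\bigcup_i h_i(2^\omega\setminus L_i^\omega)\cup\{10^\omega\}$.
\end{itemize}
From here the complexity bound is routine. The set $\bigcup_i h_i(2^\omega)\cup\{10^\omega\}$ is closed. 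An injective continuous homomorphism on the compact space $2^\omega$ sends closed sets to closed sets. Via $h(B)=h(2^\omega)\setminus h(2^\omega\setminus B)$, it then sends $\Pi^0_2$ sets to $\Pi^0_2$ sets and $\Sigma^0_3$ sets to $\Sigma^0_3$ sets. This makes both $A^\omega$ and its complement visibly $\Sigma^0_3$.

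If you insist on tail-dependent membership, you would need an extra input, such as regularity of the $M_n$ (so that $2^*M_n^\omega$ is regular and hence $\Delta^0_3$). You would also need a precisely specified junk set for which that really is the membership condition.
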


\section{Preliminaries}

We use standard notation from descriptive set theory~\cite{Kec95}. For clarity, we briefly recall the basic definitions.

Let $\Sigma$ be a finite alphabet. In this paper, we are interested in the alphabet $\Sigma=\{0,1\}$, which we denote by $2$ for convenience. We write $2^\omega$ for the set of words of length $\omega$ over this alphabet, $2^*$ for the set of finite words and $2^+$ for the set of nonempty words. The concatenation of finite words $u$ and $v$ is denoted by $uv$. A finite word $u$ is a prefix of a finite or infinite word $v$, denoted by $u \leq v$, if there is a word $w$ such that $v=uw$.

The space $2^\omega$ carries the product topology, whose basic open sets are the cylinders $N_u := \{x \in 2^\omega \mid u \leq x\}$, where $u\in 2^*$. We now define the Borel hierarchy on this space.

Let $\Sigma^0_1$ be the class of open sets, that is,
\[
\Sigma^0_1 := \left\{\bigcup_{u\in S}N_u \mid S\subseteq 2^*\right\}.
\]
For each nonzero countable ordinal $\xi$, we put $\Pi^0_\xi := \{2^\omega\setminus U \mid U\in\Sigma^0_\xi\}$ and $\Delta^0_\xi := \Sigma^0_\xi\cap\Pi^0_\xi$. For $2\leq\xi<\omega_1$, the classes $\Sigma^0_\xi$ are defined inductively by
\[
\Sigma^0_\xi := \left\{\bigcup_{n\in\omega}A_n \mid \forall n\in\omega\; A_n\in\bigcup_{1\leq\eta<\xi}\Pi^0_\eta\right\}.
\]

We use two versions of the difference hierarchy. The increasing version is used to define the Lavrentieff hierarchy in~\cite[Section~2]{FL24}.

\begin{definition}[Difference hierarchy]\label{def:diff}
Fix a countable ordinal $\zeta$ and a set $X$. Let $O=(O_\eta)_{\eta<\zeta}$ be a family of subsets of $X$ with $O_\eta\subseteq O_\theta$ whenever $\eta<\theta<\zeta$. Put
\[
\begin{aligned}
D_\zeta(O):=\{x\in X\mid \exists\eta<\zeta\,
(&\operatorname{parity}(\eta)\neq\operatorname{parity}(\zeta)\\
&{}\land x\in O_\eta\setminus\bigcup_{\theta<\eta}O_\theta)\}.
\end{aligned}
\]
Here the parity of an ordinal $\alpha=\lambda+n$, where $\lambda$ is zero or a limit ordinal and $n\in\omega$, is the parity of $n$.

For a Polish space $X$ and a class $\Gamma$ of subsets of Polish spaces, define
\[
D_\zeta(\Gamma)(X):=\{D_\zeta(O)\mid
\forall\eta<\zeta\; O_\eta\in\Gamma(X)\},
\]
where $O$ ranges over increasing families as above. In particular, $D_0(\Gamma)(X)=\{\emptyset\}$. Taking $\Gamma=\Sigma^0_\xi$ gives the difference hierarchy over $\Sigma^0_\xi$ sets.
\end{definition}

We also use the decreasing version of the difference hierarchy, extending the finite-level definition in~\cite[Definition~6]{FGL26}.

\begin{definition}[Decreasing difference hierarchy]\label{def:diff-down} Fix a countable ordinal $\zeta$ and a set $X$. Let $G=(G_\eta)_{\eta<\zeta}$ be a family of subsets of $X$ with $G_\theta\subseteq G_\eta$ whenever $\eta<\theta<\zeta$, and put $G_\zeta=\emptyset$. Define
\[
D^\downarrow_\zeta(G):=
\bigcup_{\substack{\eta<\zeta\\\operatorname{parity}(\eta)=0}}
(G_\eta\setminus G_{\eta+1}).
\]
For a Polish space $X$ and a class $\Gamma$ of subsets of Polish spaces, put
\[
D^\downarrow_\zeta(\Gamma)(X):=\{D^\downarrow_\zeta(G)\mid
\forall\eta<\zeta\;G_\eta\in\Gamma(X)\},
\]
where $G$ ranges over decreasing families as above. Taking $\Gamma=\Pi^0_2$ gives the hierarchy used in~\cite{FGL26}. For $n<\omega$, reversing the finite family gives $D^\downarrow_n(\Gamma)(X)=D_n(\Gamma)(X)$.
\end{definition}

\begin{definition}\label{def:reduction}
Let $X$ and $Y$ be two topological spaces. We say that a set $A\subseteq X$ continuously reduces to $B\subseteq Y$ if there is a continuous function $f:X\to Y$ such that the preimage $f^{-1}(B)=\{x\in X\mid f(x)\in B\}$ is equal to $A$ (i.e., $x\in A\iff f(x)\in B$ for every $x\in X$).
\end{definition}

The levels of the Borel hierarchy and of both versions of the difference hierarchy ($D_\eta$ and $D^\downarrow_\eta$) are closed under continuous preimages. Moreover, the finite difference hierarchies over $\Sigma^0_1$ and $\Pi^0_2$ are strict.

\section{The proof of the \texorpdfstring{$\Delta^0_2$}{Delta-0-2} case}

First, let us recall the result in~\cite{FL24}. We will only need the following theorem. 

\begin{theorem}[Finkel--Lecomte]\label{thm:FL24}
For any $n$, there exists a regular language $L \subseteq 2^{+}$ such that $L^{\omega}$ is $D_n(\Sigma^0_1)$-complete. 
\end{theorem}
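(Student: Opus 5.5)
Theorem~\ref{thm:FL24} is quoted from~\cite{FL24}, and the argument of this paper only uses it as a black box. If I had to prove it myself, the plan would be a two-part argument: an upper bound ($L^\omega\in D_n(\Sigma^0_1)$) and a lower bound (a continuous reduction from a known $D_n(\Sigma^0_1)$-complete set). I would build $L$ explicitly, by induction on $n$, over a small auxiliary alphabet $\{a,b,c\}$ that is afterwards coded into $\{0,1\}$ by a prefix code such as $a\mapsto 0$, $b\mapsto 10$, $c\mapsto 11$. A prefix-code translation preserves regularity. It also preserves the relevant topological complexity: it is a continuous injective map with closed image, and I expect the complement of the image to be absorbed into the right level of the hierarchy.

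For the lower bound, the target is the canonical complete set $C_n$: the set of $x$ in which a marker letter $c$ occurs some $k<n$ times, $k$ finite, with $k$ of the right parity relative to $n$. Equivalently, the open sets $O_k=\{x : x \text{ has at least } k+1 \text{ occurrences of } c\}$, $k<n$, form the witnessing family in Definition~\ref{def:diff} (read with the decreasing convention). The idea is to choose $L$ so that the factorizations of $x$ into words of $L$ are essentially forced between consecutive occurrences of the marker. Each new marker should then toggle whether the current infinite tail can still be completed to an $L$-factorization. Concretely, I would let $L$ contain the blocks $a^+$ and $b^+$ freely, together with words of the form $w c$ whose admissibility depends on a finite-state parity counter. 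The $\omega$-power then ``remembers'' the parity of the number of markers seen so far, and after $n$ markers it is frozen to a fixed value. The reduction $C_n\le L^\omega$ is then the identity, or a letter-by-letter substitution, and is visibly continuous.

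For the upper bound, I would show that $x\in L^\omega$ depends only on the number $k$ of markers in $x$, capped at $n$, except on a small exceptional set that is itself open or closed and can be merged into one layer. I would define $O_k$ from the ``at least $k+1$ markers'' condition and verify the parity pattern directly. Regularity of $L$ is immediate, since it is defined by a finite automaton tracking the parity counter.

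The main obstacle I expect is controlling the factorizations. Because $L^\omega$ allows arbitrary cuts, one must rule out unintended factorizations that bypass the counter, for example by splitting a marker block differently. This is exactly why naive choices such as $L=2^+$ collapse to trivial sets. My first attempt would be to make the words of $L$ that contain the marker end exactly at the marker, with the marker not occurring elsewhere. I would then prove by induction on the number of markers read that every factorization cuts immediately after each marker. With that synchronization lemma in hand, both bounds are routine.
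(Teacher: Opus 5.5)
The paper contains no proof of this statement; it imports it from \cite{FL24}. So your sketch has to stand on its own, and its central mechanism fails. The recoding step is fine. In fact $\{0,10,11\}$ is a complete prefix code, so the recoding is a homeomorphism of $\{a,b,c\}^\omega$ onto $2^\omega$ and $h(L)^\omega=h(L^\omega)$, with no complement to absorb.

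The problem is that an $\omega$-power cannot carry memory across a cut. We have $L\cdot L^\omega\subseteq L^\omega$, and if $x=u_0u_1u_2\cdots$ with all $u_j\in L$, then every tail $u_ju_{j+1}\cdots$ is again in $L^\omega$. Your synchronization lemma (every factorization cuts right after each marker) therefore destroys the parity memory rather than supporting it: a finite-state counter inside a single word $wc$ only ever sees one marker. Concretely, suppose $a^+\cup b^+\subseteq L\subseteq\{a,b\}^+\cup\{a,b\}^*c$. Then $x\in L^\omega$ iff every $c$-terminated segment of $x$ lies in $\{a,b\}^*(L\cap\{a,b\}^*c)$, since the marker-free tail is always factorizable. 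This is a closed condition, so your $L^\omega$ is in $\Pi^0_1$ and is not $D_n(\Sigma^0_1)$-complete for any $n\ge 1$.

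The obstruction is structural, not a matter of tuning $L$. Write $|x|_c$ for the number of occurrences of $c$. By the tail property, if $L^\omega$ contains a word with finitely many markers, it contains a marker-free word. Your target $C_n$ contains words with exactly one marker but no marker-free word, so $C_n$ is not an $\omega$-power. Hence the identity reduction cannot exist. A marker-preserving letter substitution fails too, since all marker-free words lie in your $L^\omega$.

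More generally, a nonempty $\omega$-power whose membership depends only on $\min(|x|_c,n)$ must contain the closed layer $\{x:|x|_c=0\}$. Such a set is Wadge equivalent to $\{x:\min(|x|_c,m)\text{ is even}\}$ for some $m$. That is, it is complete for the dual class $\check{D}_m(\Sigma^0_1)$ and never $D_m(\Sigma^0_1)$-complete. Your counting idea does produce these dual sets: for even $n$, taking $L=\{w\in\{a,b,c\}^+:|w|_c\text{ even},\ |w|_c<n\}\cup\{w:|w|_c\ge n\}$ gives $L^\omega=\{x:\min(|x|_c,n)\text{ even}\}$, which is exactly the complement of $C_n$.

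So all the real content sits in your unspecified ``exceptional set''. To land in $D_n$ rather than its dual, points must be kept out of $L^\omega$ because their tails admit no factorization at all, not because of a marker count. This is the phenomenon behind $10^\omega$ in Proposition~\ref{prop:form}. For instance, $L=\{0\}\cup\{w\in 2^*:|w|\ge 2,\ w\notin 10^*\}$ has $L^\omega=2^\omega\setminus\{10^\omega\}$, which is open but not closed, hence $\Sigma^0_1$-complete. A proof needs a construction of that type for every $n$, and your proposal does not supply one.
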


Now let us fix the sequence of languages obtained from Theorem~\ref{thm:FL24}. That is, we have languages $\{L_i\}_{i \geq 1}$ such that each $L_i^\omega$ is complete for the corresponding class $D_i(\Sigma^0_1)$, and each $L_i$ is a regular language defined over the alphabet $\{0,1\}$.

Now the main idea is to define a language $A$ for which the topological complexity of $A^\omega$ will be greater than the topological complexity of each $L_i^\omega$. To do this, we need to encode all languages $\{L_i\}_{i \geq 1}$ as disjoint subsets of $2^*$. Each letter in a word of $L_i$ is encoded by a string determined by that letter and the index $i$. More formally, we want to define different strings $h_i(0)$ and $h_i(1)$ which will be the encodings of $0$ and $1$ when we have a word from the language $L_i$. Since we need the codes not to intersect, we need the codewords $h_i(0)$ and $h_i(1)$, $i\geq 1$, to form a prefix code. 

It is obvious that this can be done in different ways, and the easiest way is the following. 

\begin{definition}\label{def:code}
For each natural number $n \geq 1$, we define 
$$ h_n(0) = 1 0^{2n-1} 1 , \quad h_n(1) = 1 0^{2n} 1$$
\end{definition}

Each $h_n$ extends naturally to finite and infinite words and then to languages as follows: 
\begin{align*}
& h_n(L) := \{ h_n(a_0) \ldots h_n(a_k) \mid a_0 \ldots a_k \in L\}; \\
& h_n(L^\omega) := \{ h_n(w_0) h_n(a_1) \ldots \mid \forall i\in \omega w_i \in L \}.
\end{align*}

The next step in the proof is to collect all the words obtained by encoding each $L_i$. We define a language $A_0$ which consists of all homomorphic encodings of words in the languages $L_i$. More formally, the language $A_0$ is defined as follows: 
$$A_0 = \bigcup_{i=1}^{\infty} h_i(L_i).$$

So it is not hard to see that $L_i^{\omega} \leq_W A_0^\omega$ for each index. But the language $A_0$ cannot be a suitable example for the main question. The main problem is that $A_0^\omega$ could be a language not in $\Delta^0_2$. 

Moreover, it is not hard to see that, for some collection of topologically simple languages $L_i^\omega$ and homomorphisms $h_i$, the language $A_0^\omega$ can be $\Pi^0_2$-complete. For example, suppose that all $L_i$ are equal to the language consisting of the single word $0$ and that $h_i (0) = 0^{i-1} 1$. Then $A_0^\omega$ will be the language that consists of words with infinitely many occurrences of $1$. So $A_0^\omega$ is a classical example of a $\Pi^0_2$ set.

So the next idea is to modify the language $A_0$ and to define a language $A$ whose $\omega$-power will be a $\Delta^0_2$ set. We define $A$ in the following way:
$$A = A_0 \cup \{x \in 2^* \mid \forall i \geq 1 \forall y \in h_i(2^\omega) (\neg(x \leq y))\}.$$ 
So we have just added all bad prefixes to $A_0$, where ``bad'' means that such a word cannot be a prefix of any word $h_i(w)$. Note that this also means that such a word cannot be a prefix of any word in $A_0$. 

Now, first of all, we need to clarify what $A^\omega$ is. The next proposition does this. 
\begin{proposition}\label{prop:form}
$A^\omega = \bigcup_{i = 1}^{\infty} (h_i(L_i^\omega)) \cup (2^\omega \setminus (\bigcup_{i=1}^{\infty}(h_i(2^\omega)) \cup \{10^\omega\}))$.
\end{proposition}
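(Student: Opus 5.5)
We prove the two inclusions separately. Everything rests on two simple facts about the codes $h_i$.

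First, every codeword has the form $10^k1$ with $k\geq 1$, and $k$ determines both $i$ and the letter. So any word in $h_i(2^\omega)$, or any concatenation of codewords of various $h_j$, splits uniquely into blocks $10^k1$. Each block tells us which $h_j$ it comes from.

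Second, call a finite word \emph{bad} if it is in the second part of $A$. Then:
\begin{itemize}
\item[(a)] every extension of a bad word is bad;
\item[(b)] every word starting with $0$ is bad, and so is every word starting with $11$;
\item[(c)] no prefix of $10^\omega$ is bad, since $10^m$ is a prefix of $h_i(0^\omega)$ for $2i-1>m$;
\item[(d)] $h_i(2^\omega)$ is closed, being a continuous image of the compact space $2^\omega$.
\end{itemize}

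\emph{Inclusion $\subseteq$.} Let $x=w_0w_1w_2\cdots$ with all $w_j\in A$ nonempty. We split into cases.
\begin{itemize}
\item Suppose some $w_j$ is bad, and take the least such $j$. Then $w_0\cdots w_{j-1}$ is a concatenation of codewords. If $w_0\cdots w_j$ were a prefix of some $h_i(y)$, unique decoding would force $w_0\cdots w_{j-1}$ to consist of $h_i$-codewords, and then $w_j$ would be a prefix of some $h_i(y')$. That contradicts badness of $w_j$. Hence $x$ has a bad prefix. Then $x\notin h_i(2^\omega)$ for every $i$, and $x\neq 10^\omega$ by (c).
\item Suppose all $w_j\in A_0$ and all are encoded by the same $h_i$. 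Then $x\in h_i(L_i^\omega)$, because decoding is unique and $w_j\in h_i(L_i)$.
\item Suppose all $w_j\in A_0$ but blocks from two different $h_i,h_{i'}$ occur. Then $x$ lies in no $h_i(2^\omega)$ by unique block decomposition, and clearly $x\neq 10^\omega$. So $x$ lies in the second set.
\end{itemize}

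\emph{Inclusion $\supseteq$.} Since $L_i\subseteq 2^+$, we have $h_i(L_i^\omega)\subseteq A_0^\omega\subseteq A^\omega$.

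Now let $x\notin\bigcup_i h_i(2^\omega)\cup\{10^\omega\}$. We first show that $x$ has a bad prefix. Suppose not.
\begin{itemize}
\item By (b), $x$ begins with $1$ and not with $11$.
\item Since $x\neq 10^\omega$, $x$ begins with $10^k1$ for some $k\geq1$. This block forces a unique $i$ with $k\in\{2i-1,2i\}$.
\item A prefix starting with $10^k1$ can only be a prefix of $h_i(y)$ for this fixed $i$. So every finite prefix of $x$ is a prefix of some element of $h_i(2^\omega)$.
\item Then $x$ lies in the closure of $h_i(2^\omega)$, which by (d) is $h_i(2^\omega)$ itself. This is a contradiction.
\end{itemize}
So let $u\leq x$ be bad, and write $x=uz$.

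It remains to cut $x$ into bad finite pieces.
\begin{itemize}
\item If $z$ contains infinitely many $0$'s, cut $x$ immediately before each occurrence of $0$ in $z$. The first piece extends $u$, so it is bad by (a). Every later piece has the form $01^m$, so it is bad by (b).
\item If $z$ is eventually $1^\omega$, enlarge $u$ by (a) to a bad prefix after which only $1$'s follow. Then cut the rest into pieces $11$, which are bad by (b).
\end{itemize}
In both cases every piece is in $A$, so $x\in A^\omega$.

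The main obstacle is the step showing that $x$ has a bad prefix. It needs the observation that the first block pins down $i$, together with closedness of $h_i(2^\omega)$; the exceptional point $10^\omega$ is exactly where this argument fails.
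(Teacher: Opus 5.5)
Your proof is correct. For the inclusion $\supseteq$ it takes a genuinely different route from the paper.

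The paper constructs explicit factorizations by a case analysis on block structure. If $x$ cannot be parsed into blocks $10^k1$, it peels off the parsable prefix $x_0\cdots x_n$. It then treats the remainder $\alpha$ according to whether it begins with $0$, with $11$, or equals $10^\omega$, with further sub-cases on the tail $\beta$. These include a rather informal ``apply the same argument to $\beta$'' recursion. If $x$ is parsable but mixes codes, it re-cuts each block just after its first $0$, giving the pieces $x''_j x'_{j+1}$.

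You instead isolate a single lemma: every $x\notin\bigcup_i h_i(2^\omega)\cup\{10^\omega\}$ has a bad prefix. You prove it by observing that the first block $10^k1$ pins down $i$ and that $h_i(2^\omega)$ is closed by compactness. After that the factorization is uniform and almost trivial, because badness passes to extensions and every word beginning with $0$ or $11$ is bad. This is shorter, avoids the sub-cases and the recursion, and shows transparently why $10^\omega$ is the only exception.

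Combined with (c), your lemma in fact says that $\bigcup_i h_i(2^\omega)\cup\{10^\omega\}$ is exactly the set of points with no bad prefix, hence closed. The paper proves this separately, by a convergent-sequence argument, in Proposition~\ref{prop:delta2}. In exchange, the paper's route stays purely combinatorial and exhibits every factor explicitly without appealing to compactness.

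For $\subseteq$ the two proofs are the same unique-decoding argument, organized differently. You split on the types of the factors $w_j$; the paper splits on whether $x\in\bigcup_i h_i(2^\omega)$. Your version has the small advantage of actually justifying $x\neq 10^\omega$, which the paper merely asserts.
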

\begin{proof}
We prove the equality in both directions.
	
($\supseteq$) First, let us show that each $h_i(L_i^\omega)$ is a subset of $A^\omega$. We note that the set $h_i(L_i)$ is a subset of $A$. 

Next, if we have a word $h_i(x) \in h_i(L_i^\omega)$, then $x = x_0 x_1 \ldots$, where each $x_j$ is an element of $L_i$. But each $h_i(x_j)$ is an element of $h_i(L_i)$, and hence the word $h_i(x) = h_i(x_0) h_i(x_1) \ldots$ belongs to $(h_i(L_i))^\omega$. So we have proved that $\bigcup_{i = 1}^{\infty} (h_i(L_i^\omega)) \subseteq A^\omega$.

Now, assume that $x$ is an element of $(2^\omega \setminus (\bigcup_{i=1}^{\infty}(h_i(2^\omega)) \cup \{10^\omega\}))$. There are two possible cases. 

First, suppose that $x$ cannot be partitioned into blocks of the form $1 0^i 1$ (where $i \geq 1$). We separate finitely many such blocks, obtaining $x = x_0 x_1 \ldots x_n \alpha$, until we cannot find the next block. This means that $\alpha$ starts with $0$ or $11$, or is equal to $1 0^\omega$. 

Suppose that $\alpha = 0 \beta$ or $ \alpha = 11 \beta$, where $\beta \in 2^\omega$. If $\beta \in (2^\omega \setminus (\bigcup_{i=1}^{\infty}(h_i(2^\omega)) \cup \{10^\omega\}))$, we can apply the same argument to $\beta$, and the first part of the partition of $x$ will be $x_0 \ldots x_n 0$ (or $x_0 \ldots x_n 11$ in the second case), which is an element of $A$ because no correct homomorphic images have such a prefix. If $\beta \in h_j(2^\omega)$ for some $j$, then $\beta = y_0 y_1 \ldots$. If $\alpha$ starts with $0$, then the correct partition of $x$ is $(x_0 \ldots x_n 0 y'_0)(y''_0 y'_1) (y''_1 y'_2) \ldots $, where $y'_{j}$ is the part of $y_{j}$ ending just after the first $0$, and $y''_{j}$ is the remaining part. If $\alpha$ starts with $11$, the partition is $(x_0 \ldots x_n 11 y'_0)(y''_0 y'_1) (y''_1 y'_2) \ldots $. Finally, if $\beta = 10^\omega$, the partition of $x$ is $(x_0 \ldots x_n 0 1) (0) (0) \ldots$ if $\alpha$ starts with $0$, or $(x_0 \ldots x_n 1 1 1) (0) (0) \ldots$ if $\alpha$ starts with $11$.

If $\alpha = 1 0^\omega$, then the partition of $x$ is $(x_0 \ldots x_n 1 0^{m+2}) (0) (0) \ldots$, where the number $m$ is chosen as the maximum number of occurrences of $0$ in the blocks $x_j$. The only possibility for this string not to be in $A^\omega$ is that there are no $x_i$ at the beginning of the string, but $x$ cannot be $10^\omega$.

Second, suppose that $x$ can be partitioned into blocks of the form $1 0^i 1$ (where $i \geq 1$), but there is no single $n$ such that all these blocks have one of the forms $1 0^{2n-1} 1$ and $1 0^{2n} 1$. So let $x = x_0 \ldots x_k \ldots x_m x_{m+1} \ldots$, where $x_k$ and $x_m$ are blocks from different homomorphic images. Now we can partition $x$ into the following blocks: $ (x_0 \ldots x_m x'_{m+1}) (x''_{m+1} x'_{m+2}) (x''_{m+2} x'_{m+3}) \ldots $ (parentheses indicate the partition), where $x'_{j}$ is the part of $x_{j}$ ending just after the first $0$, and $x''_{j}$ is the remaining part. Each part of this partition belongs to $A$ because no correctly encoded word can start with it.

($\subseteq$) Let us consider a word $x = x_0 x_1 \ldots$ in $A^\omega$. If $x$ does not belong to $\bigcup_{i=1}^{\infty}(h_i(2^\omega))$, then, of course, it is an element of $2^\omega \setminus (\bigcup_{i=1}^{\infty}(h_i(2^\omega)) \cup \{10^\omega\})$, since $x$ cannot be $10^\omega$. 

So we can consider a word $x$ that belongs to $h_j(2^\omega)$ for some $j$. So $x_0$ belongs to $A_0$ because it is a prefix of some word $h_j(z)$. So $x_0 = h_j(y_0) \in h_j(L_j)$, and the word $x_1 x_2 \ldots$ is also an element of $A^\omega$ and also belongs to $h_j(2^\omega)$. So, after infinitely many steps (considering $x_j x_{j+1} \ldots$ and constructing $y_j$), we get a word $y = y_0 y_1 \ldots$ which belongs to $L_j^\omega$, and $h_j(y) = x_0 x_1 \ldots$, so $x \in h_j(L_j^\omega)$.
\end{proof}

Now we know how the language $A^\omega$ can be represented. The next step is to show that this language is topologically quite simple. 

\begin{proposition}\label{prop:delta2}
$A^\omega \in \Delta^0_2$.
\end{proposition}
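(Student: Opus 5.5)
We use the description from Proposition~\ref{prop:form} and show that $A^\omega$ is both $\Sigma^0_2$ and $\Pi^0_2$. Put $H_i := h_i(2^\omega)$ and $H := \bigcup_{i\ge1} H_i$. Each $H_i$ is closed, since it is the image of the compact space $2^\omega$ under the continuous map $h_i$. The sets $H_i$ are pairwise disjoint, because the first block of any element determines $i$. The map $h_i$ is a homeomorphism onto $H_i$: each block has length $2i+1$ or $2i+2$ and can be read off from finitely many letters. Hence $h_i(L_i^\omega)$ is the image of a $\Delta^0_2$ set, namely a set in $D_i(\Sigma^0_1)$ by Theorem~\ref{thm:FL24}, under a homeomorphism onto a closed subset. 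Such a set is relatively $\Delta^0_2$ in $H_i$, and therefore $\Delta^0_2$ in $2^\omega$, because $H_i$ is closed.

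\emph{Step 1: $A^\omega\in\Sigma^0_2$.} By Proposition~\ref{prop:form},
\[
A^\omega=\bigcup_i h_i(L_i^\omega)\cup\bigl(2^\omega\setminus(H\cup\{10^\omega\})\bigr).
\]
The first union is a countable union of $\Sigma^0_2$ sets. For the second term we need $H\cup\{10^\omega\}$ to be $\Pi^0_2$. We claim in fact that $K:=H\cup\{10^\omega\}$ is closed. Let $x_k\in H_{i_k}$ converge to $x$. If $(i_k)$ has a bounded subsequence, then $x$ lies in the closed set $H_i$ for some $i$. Otherwise $i_k\to\infty$, and every $x_k$ starts with $10^{2i_k-1}$, so $x=10^\omega$. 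Thus $2^\omega\setminus K$ is open, and $A^\omega\in\Sigma^0_2$.

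\emph{Step 2: $A^\omega\in\Pi^0_2$.} We compute the complement:
\[
2^\omega\setminus A^\omega=\bigcup_i\bigl(H_i\setminus h_i(L_i^\omega)\bigr)\cup\{10^\omega\}.
\]
This uses the disjointness of the $H_i$ and the fact that $10^\omega\notin H$. Each set $H_i\setminus h_i(L_i^\omega)$ is the intersection of a closed set with the complement of a $\Delta^0_2$ set, so it is $\Sigma^0_2$. The singleton $\{10^\omega\}$ is closed. Hence the complement is a countable union of $\Sigma^0_2$ sets, so it is $\Sigma^0_2$, and $A^\omega\in\Pi^0_2$.

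\emph{Main obstacle.} The delicate point is the transfer of complexity through $h_i$. One must check that $h_i$ is a homeomorphism onto its closed image. It is continuous because its blocks are nonempty. Its inverse is continuous because the decoding is uniquely determined from finite prefixes, as the codewords $h_i(0),h_i(1)$ form a prefix code. With this in hand, $\Pi^0_1$ and $\Sigma^0_1$ sets of $2^\omega$ are carried to relatively closed and relatively open subsets of $H_i$, and these are $\Delta^0_2$ in $2^\omega$. The second delicate point is the closedness of $K$, which rests on the accumulation at $10^\omega$; this is exactly why $10^\omega$ was excluded in Proposition~\ref{prop:form}.
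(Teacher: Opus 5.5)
Your proof is correct and follows the paper's argument: the same decomposition from Proposition~\ref{prop:form}, the same closedness argument for $\bigcup_i h_i(2^\omega)\cup\{10^\omega\}$ via accumulation at $10^\omega$, and the same computation of the complement. The only variation is how complexity is transferred through $h_i$: you treat $h_i$ as a homeomorphism onto the closed set $h_i(2^\omega)$ and use relative Borel classes, while the paper proves Lemma~\ref{lem:images} on images under injective homomorphisms; the two amount to the same thing.
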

\begin{proof}
First, we prove that $A^\omega \in \Sigma^0_2$ and then that $A^\omega \in \Pi^0_2$, which is equivalent to $2^\omega\setminus A^\omega \in \Sigma^0_2$. We first need the following quite simple lemma.
\begin{lemma}\label{lem:images}
	If $h$ is an injective homomorphism $2^\omega \to 2^\omega$, then the following hold: 
	\begin{itemize}
	\item if $A \in \Pi^0_1$, then $h(A) \in \Pi^0_1$;
	\item if $A\in \Sigma^0_2$, then $h(A) \in \Sigma^0_2$;
	\item if $A\in \Pi^0_2$, then $h(A) \in \Pi^0_2$;
	\item if $A\in \Sigma^0_3$, then $h(A) \in \Sigma^0_3$.
	\end{itemize}
\end{lemma}
\begin{proof}
The first statement is obvious: any limit point of $h(A)$ has to be in $h(A)$ because its inverse image has to be in $A$. The second statement follows directly from the first one. The third statement follows from the representation $h(A) = h(2^\omega)\setminus h(2^\omega \setminus A) = h(2^\omega) \cap (2^\omega\setminus h(2^\omega \setminus A))$, where the first set belongs to $\Pi^0_1$ and hence to $\Pi^0_2$, and the set $h(2^\omega \setminus A)$ belongs to $\Sigma^0_2$, so its complement belongs to $\Pi^0_2$. The fourth statement follows immediately from the third.
\end{proof}

($\Sigma^0_2$) Each $L_i^\omega$ is a $\Sigma^0_2$ set since it is $D_i(\Sigma^0_1)$-complete. So, by Lemma~\ref{lem:images}, each $h_i(L_i^\omega)$ is a $\Sigma^0_2$ set. 

Next, we need to analyze $\bigcup_{i=1}^{\infty} h_i(2^\omega) \cup \{10^\omega\}$. The task now is to prove that this set belongs to $\Pi^0_2$ and hence that its complement belongs to $\Sigma^0_2$. 

Let us fix a sequence $a_n = h_{i(n)}(b_n)$ which converges to some $a$. We have two possible cases. First, if the set $\{i(n)\}$ is finite, then, for some $n$, all $a_m$ with $n<m$ are obtained by the same homomorphism $h_n$. So the element $a$ has to be an element of $h_n(2^\omega)$. Second, if the set $\{i(n)\}$ is infinite, then the only element to which $a_n$ can converge is $10^\omega$. 

So the set $\bigcup_{i=1}^{\infty} h_i(2^\omega) \cup \{10^\omega\}$ is equal to $cl(\bigcup_{i=1}^{\infty} h_i(2^\omega) \cup \{10^\omega\})$. So the whole set belongs to $\Pi^0_1$, and hence its complement belongs to $\Sigma^0_2$. 

Finally, we have shown that $A^\omega$ is a $\Sigma^0_2$ set, as it is a countable union of $\Sigma^0_2$ sets. 

($\Pi^0_2$) The set $2^\omega\setminus A^\omega$ consists of words $w$ such that $w \in h_j(2^\omega)$ but $w \notin h_j(L_j^\omega)$ for some $j$, or such that $w$ is exactly the word $10^\omega$. So it can be represented as follows: $\bigcup_{i=1}^{\infty}(h_i(2^\omega) \setminus h_i(L_i^\omega)) \cup \{10^\omega\} =\bigcup_{i=1}^{\infty}(h_i(2^\omega \setminus L_i^\omega)) \cup \{10^\omega\} $. Each set $2^\omega \setminus L_i^\omega$ is a $\Delta^0_2$ set because $L_i^\omega$ is a $D_i(\Sigma^0_1)$-complete set by construction and Theorem~\ref{thm:FL24}. So, by Lemma~\ref{lem:images}, each set in the union is a $\Sigma^0_2$ set; hence the union is also a $\Sigma^0_2$ set. The set $\{10^\omega\}$ belongs to $\Pi^0_1$, and hence the whole union belongs to $\Sigma^0_2$.
\end{proof}

\begin{proposition}\label{prop:rank2}
$A^\omega \notin \bigcup_{n\in \omega} D_n(\Sigma^0_1)$.
\end{proposition}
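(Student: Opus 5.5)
We argue by contradiction: suppose $A^\omega\in D_n(\Sigma^0_1)$ for some $n$. We will show that $L_{n+1}^\omega$ continuously reduces to $A^\omega$. Since the levels $D_n(\Sigma^0_1)$ are closed under continuous preimages, this gives $L_{n+1}^\omega\in D_n(\Sigma^0_1)$. But by Theorem~\ref{thm:FL24}, $L_{n+1}^\omega$ is $D_{n+1}(\Sigma^0_1)$-complete, so every $D_{n+1}(\Sigma^0_1)$ set would lie in $D_n(\Sigma^0_1)$. This contradicts the strictness of the finite difference hierarchy over $\Sigma^0_1$ recalled in the preliminaries.

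The reduction is the homomorphism $h_{n+1}\colon 2^\omega\to 2^\omega$ itself. It is continuous, since the first $k$ letters of $x$ determine at least the first $k$ letters of $h_{n+1}(x)$ (each codeword has length at least $3$). The key step is to check that for every $x\in 2^\omega$,
\[
x\in L_{n+1}^\omega \iff h_{n+1}(x)\in A^\omega .
\]
We read this off Proposition~\ref{prop:form}.

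For the forward direction: if $x\in L_{n+1}^\omega$, then $h_{n+1}(x)\in h_{n+1}(L_{n+1}^\omega)\subseteq A^\omega$.

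For the converse, suppose $h_{n+1}(x)\in A^\omega$. The point $h_{n+1}(x)$ lies in $h_{n+1}(2^\omega)$, so it is not in the "bad" part $2^\omega\setminus(\bigcup_i h_i(2^\omega)\cup\{10^\omega\})$. Hence $h_{n+1}(x)\in h_j(L_j^\omega)$ for some $j$, say $h_{n+1}(x)=h_j(y)$ with $y\in L_j^\omega$. We need $j=n+1$ and $y=x$. Both words parse uniquely into blocks $10^m1$ with $m\ge1$, reading each block up to the next $1$ after the zeros. The block lengths for $h_{n+1}$ are $m\in\{2n+1,2n+2\}$, and the sets $\{2i-1,2i\}$ are pairwise disjoint for distinct $i$, so the first block forces $j=n+1$. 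Injectivity of $h_{n+1}$, via the same unique parsing, then gives $y=x$, so $x\in L_{n+1}^\omega$.

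The main point needing care is this unique-decoding claim, i.e.\ that the codes $\{h_i(0),h_i(1)\}_{i\ge1}$ are disjoint, prefix-free and uniquely decodable on infinite words. This was used implicitly in Proposition~\ref{prop:form} and Lemma~\ref{lem:images}, and it is immediate from the block structure. Everything else is direct from Theorem~\ref{thm:FL24} and closure of $D_n(\Sigma^0_1)$ under continuous preimages.
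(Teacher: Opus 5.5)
Your proof is correct and follows the paper's argument. You assume $A^\omega\in D_n(\Sigma^0_1)$, pull back along the continuous map $h_{n+1}$, use Proposition~\ref{prop:form} to get $h_{n+1}^{-1}(A^\omega)=L_{n+1}^\omega$, and contradict the $D_{n+1}(\Sigma^0_1)$-completeness of $L_{n+1}^\omega$; you also make explicit two points the paper leaves implicit, namely the unique-decoding step forcing $j=n+1$ and the appeal to strictness of the finite difference hierarchy.
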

\begin{proof}
The proof is by contradiction. Suppose that $A^\omega \in D_k(\Sigma^0_1)$. By the definition of $A$, we can consider a language $L_i^\omega$ with $i > k$. Then consider $h_i^{-1}(A^\omega)$, which is equal to $L_i^\omega$ by Proposition~\ref{prop:form}. Since $h_i$ is continuous (which can be seen directly), $h_i^{-1}(A^\omega) = L_i^\omega$ must belong to $D_k(\Sigma^0_1)$, but it does not, since $L_i^\omega$ is $D_i(\Sigma^0_1)$-complete by Theorem~\ref{thm:FL24}.
\end{proof}

This construction yields the following theorem, which answers the question posed in~\cite[Section~4]{FL24}.

\begin{theorem}[Theorem~\ref{thm:main2}, restated]\label{thm:result2}
There exists a language $L \subseteq\{0,1\}^*$ such that $L^\omega \in \Delta^0_2$ and $L^\omega \notin \bigcup_{n\in \omega} D_n(\Sigma^0_1)$.
\end{theorem}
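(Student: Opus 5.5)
The plan is to take $L := A$, the language built above from the Finkel--Lecomte languages $L_i$ of Theorem~\ref{thm:FL24} and the prefix codes $h_i$ of Definition~\ref{def:code}, and then combine the two propositions already proved. First I note that $A\subseteq 2^*$ by definition: it is the union of $A_0=\bigcup_i h_i(L_i)$, a set of finite binary words, with the set of finite words that are not prefixes of any $h_i(y)$. So $A$ is a legitimate language over $\{0,1\}$. The only point to remember is that $A$ is not regular, which is harmless because the theorem asks only for some language.

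Next I would quote Proposition~\ref{prop:delta2}, giving $A^\omega\in\Delta^0_2$. Its proof rests on the normal form from Proposition~\ref{prop:form} and on Lemma~\ref{lem:images}. It also uses the fact that $\bigcup_i h_i(2^\omega)\cup\{10^\omega\}$ is closed: any convergent sequence drawn from infinitely many distinct codes $h_i$ must have limit $10^\omega$, because the initial block $10^{2i-1}1$ grows without bound.

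Then I would quote Proposition~\ref{prop:rank2}, giving $A^\omega\notin D_n(\Sigma^0_1)$ for every $n$. The argument is to pull $A^\omega$ back along the continuous map $h_i$ with $i>n$. This is where Proposition~\ref{prop:form} is really needed: the preimage $h_i^{-1}(A^\omega)$ must be exactly $L_i^\omega$. That holds because the images $h_j(2^\omega)$ are pairwise disjoint, and $h_i(2^\omega)$ is disjoint from the added set of "bad" words and does not contain $10^\omega$. Since each class $D_n(\Sigma^0_1)$ is closed under continuous preimages, this contradicts the $D_i(\Sigma^0_1)$-completeness of $L_i^\omega$, given that the finite difference hierarchy is strict.

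The main obstacle lies in these earlier propositions, not in the final assembly. Chiefly, it is the inclusion $\supseteq$ in Proposition~\ref{prop:form}, which requires that every word outside the closed set of correctly coded words can actually be factored into words of $A$. Taking those propositions as established, the theorem follows immediately by setting $L=A$.
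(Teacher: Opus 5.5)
Your proposal is correct and is the paper's proof: take $L=A$, then combine Proposition~\ref{prop:delta2}, which rests on Proposition~\ref{prop:form}, Lemma~\ref{lem:images} and the closedness of $\bigcup_i h_i(2^\omega)\cup\{10^\omega\}$, with Proposition~\ref{prop:rank2}, which pulls $A^\omega$ back along $h_i$ for some $i>n$ to obtain $L_i^\omega$. Your justification of $h_i^{-1}(A^\omega)=L_i^\omega$ matches the paper's, except that it also tacitly uses the injectivity of $h_i$, which holds because the $h_i$ form a prefix code.
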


\section{The proof of the \texorpdfstring{$\Delta^0_3$}{Delta-0-3} case}

The proof is essentially the same as in the case of $\Delta^0_2$. The only difference is that we need to use another sequence of languages. First, we recall the following result from~\cite[Theorem~14]{FGL26}.

\begin{theorem}[Finkel--Girardot--Lecomte]\label{thm:FGL26}
For any $n$, there exists a regular language $L \subseteq 2^{+}$ such that $L^{\omega}$ is $D^\downarrow_{2n+1}(\Pi^0_2)$-complete. 
\end{theorem}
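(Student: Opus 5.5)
The theorem is quoted from \cite[Theorem~14]{FGL26}, but I would prove it by first working over a larger finite alphabet and then coding into $\{0,1\}$. Fix $n$ and an alphabet $\Gamma_n=\{a_0,\dots,a_{2n}\}$. Let $G_k\subseteq\Gamma_n^\omega$ be the set of words in which some letter $a_j$ with $j\geq k$ occurs infinitely often. This is a decreasing family of $\Pi^0_2$ sets, and $D^\downarrow_{2n+1}(G)$ is the parity set
\[
P_n=\{x\in\Gamma_n^\omega \mid \text{the largest index occurring infinitely often in $x$ is even}\}.
\]

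\textbf{Completeness of $P_n$.} Take $B=D^\downarrow_{2n+1}(G')$ with $G'_0\supseteq\dots\supseteq G'_{2n}$ in $\Pi^0_2(2^\omega)$. Write each $G'_k$ as $\{x\mid \exists^\infty m\; x\upharpoonright m\in W_k\}$ for suitable sets of finite words $W_k$; since the $G'_k$ are decreasing, one may also arrange $W_{k+1}\subseteq W_k$. The reduction $f$ reads $x$ and at stage $m$ outputs $a_k$ for the largest $k$ such that $x\upharpoonright m$ has a ``fresh'' visit to $W_k$. A visit counts as fresh only if, since the previous time $a_k$ was output, the prefix has made new visits to all of $W_0,\dots,W_k$. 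This is the standard Wagner-type argument. It should give that the largest letter output infinitely often is $a_k$ with $k=\max\{j\mid x\in G'_j\}$, so $x\in B\iff f(x)\in P_n$.

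\textbf{Realizing the set as an $\omega$-power.} This is the step I expect to be the main obstacle. The naive choice $L=\{u\mid \text{the maximal letter of $u$ is even}\}$ fails. An initial segment can contain a large odd letter that occurs only finitely often, and then no block containing that segment lies in $L$. I would first try to mimic the ``bad prefix'' trick of Section~3. Concretely, I would take $L$ to consist of the words $u=va_{2k}$ whose last letter $a_{2k}$ has even index and is at least every letter of $v$, together with a regular family of ``absorbing'' words that let the first block swallow a finite prefix with large letters.

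Rather than hoping $L^\omega=P_n$ exactly, I would compute $L^\omega$ via a deterministic Muller automaton for it; this is possible since $L$ is regular. I would then check directly that it still contains a copy of $P_n$, reducing $P_n$ by a homomorphism that inserts harmless separators. I would also check that $L^\omega$ lies in $D^\downarrow_{2n+1}(\Pi^0_2)$, by reading off the Muller table as a decreasing chain of $\Pi^0_2$ conditions of length $2n+1$. If that chain turns out too long, I would adjust the absorbing words until the Wagner degree matches exactly.

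\textbf{Coding into $\{0,1\}$.} Finally I would transfer everything to the binary alphabet using an injective prefix code $a_j\mapsto 10^{j+1}1$, as in Definition~\ref{def:code}. The image of $L$ stays regular. The image set is a continuous preimage-equivalent copy, and the complement of the code set is closed and handled by the same bad-prefix padding as in Proposition~\ref{prop:form}. Lemma~\ref{lem:images} keeps the upper bound, and composition with the continuous coding map gives hardness.
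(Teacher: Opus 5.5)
The paper does not prove this statement. It is quoted from \cite[Theorem~14]{FGL26} and used as a black box in Section~4. Your argument therefore has to stand on its own, and it has two genuine gaps.

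\textbf{First gap: the target parity set is off by one.} With letters $a_0,\dots,a_{2n}$, your $G_0$ is the whole space. Hence
\[
P_n=(\Gamma_n^\omega\setminus G_1)\cup(G_2\setminus G_3)\cup\dots\cup G_{2n},
\qquad
\Gamma_n^\omega\setminus P_n=\bigcup_{i<n}(G_{2i+1}\setminus G_{2i+2}),
\]
and the right-hand set lies in $D^\downarrow_{2n}(\Pi^0_2)$. So $P_n$ belongs to the dual class of level $2n$.

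Suppose $P_n$ were $D^\downarrow_{2n+1}(\Pi^0_2)$-complete. Since $D^\downarrow_{2n}(\Pi^0_2)\subseteq D^\downarrow_{2n+1}(\Pi^0_2)$ (take $G'_{2n}=\emptyset$), every $D^\downarrow_{2n}(\Pi^0_2)$ set would be a continuous preimage of $P_n$. Then $D^\downarrow_{2n}(\Pi^0_2)$ would be closed under complements, which is false. For $n=0$ the failure is blatant: $P_0$ is the whole one-point space.

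Your reduction breaks exactly at points $x\notin G'_0$. Eventually there are no fresh visits, and $\max\{j\mid x\in G'_j\}$ is the maximum of the empty set. So $f$ must keep emitting some default letter, and if that letter is $a_0$, then $f(x)\in P_n$ although $x\notin B$. The fix is to add a bottom letter of odd rank, emitted at stages with no fresh visit. In other words, use a max-parity condition with $2n+2$ priorities, say $1,\dots,2n+2$, instead of $2n+1$.

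\textbf{Second gap: the $\omega$-power step is only a plan.} This step, producing a regular $L$ whose $\omega$-power has exactly the required degree, is the entire content of the cited theorem, and you leave it as a search procedure. Your verification criterion is also insufficient as stated. Showing $P_n\leq_W L^\omega$ and $L^\omega\in D^\downarrow_{2n+1}(\Pi^0_2)$ would not give completeness, because a set complete for the dual of level $2n$ satisfies both.

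Even with the corrected target, the obstacle is concrete. $L^\omega$ contains every infinite concatenation of words of $L$. So any ``absorbing'' word you add, to let the first block swallow a finite prefix containing a large odd letter, can also be used in every later block. For instance, if $w\in L$ has an odd maximal letter, then $w^\omega\in L^\omega$. You must design $L$ so that these unintended words do not push $L^\omega$ beyond level $2n+1$, while a reduction of the corrected parity set survives. ``Adjust the absorbing words until the Wagner degree matches'' presupposes that such an $L$ exists, which is exactly what has to be proved.

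The final binary coding is fine in outline. Note that the complement of the finite code set is open, not closed. But the coding only transfers a result you have not yet obtained.
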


We now choose a new sequence of languages $L_i$ obtained from Theorem~\ref{thm:FGL26}. The encodings remain the same. So all the definitions of $h_i$, $A$, and $A_0$ also remain the same. The main representation in Proposition~\ref{prop:form} also holds, since it concerns only the encodings. 

So the first change concerns Proposition~\ref{prop:delta2}; now we need to prove that $A^\omega$ is a $\Delta^0_3$ set. 

\begin{proposition}\label{prop:delta3}
$A^\omega \in \Delta^0_3$.
\end{proposition}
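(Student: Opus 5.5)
We mirror the proof of Proposition~\ref{prop:delta2}, one level higher in the Borel hierarchy. We show separately that $A^\omega\in\Sigma^0_3$ and that $2^\omega\setminus A^\omega\in\Sigma^0_3$. Both directions use the representation of Proposition~\ref{prop:form}, which still holds because it concerns only the encodings. They also use the closure properties of Lemma~\ref{lem:images} for the injective continuous homomorphisms $h_i$. Finally, we reuse the fact established in the proof of Proposition~\ref{prop:delta2} that
\[
C:=\bigcup_{i=1}^{\infty} h_i(2^\omega)\cup\{10^\omega\}
\]
is closed. That argument depended only on the codes $h_i$, not on the languages $L_i$.

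\emph{$\Sigma^0_3$ part.} By Theorem~\ref{thm:FGL26}, each $L_i^\omega$ is $D^\downarrow_{2i+1}(\Pi^0_2)$-complete. A finite Boolean combination of $\Pi^0_2$ sets lies in $\Delta^0_3$, so $L_i^\omega\in\Delta^0_3\subseteq\Sigma^0_3$. The fourth item of Lemma~\ref{lem:images} then gives $h_i(L_i^\omega)\in\Sigma^0_3$. The remaining piece of $A^\omega$ in Proposition~\ref{prop:form} is $2^\omega\setminus C$, which is open and hence in $\Sigma^0_3$. Therefore $A^\omega$ is a countable union of $\Sigma^0_3$ sets, so $A^\omega\in\Sigma^0_3$.

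\emph{$\Pi^0_3$ part.} As in the $\Delta^0_2$ case, since the images $h_i(2^\omega)$ are pairwise disjoint and $10^\omega\notin h_i(2^\omega)$,
\[
2^\omega\setminus A^\omega=\bigcup_{i=1}^{\infty} h_i\bigl(2^\omega\setminus L_i^\omega\bigr)\cup\{10^\omega\}.
\]
Each set $2^\omega\setminus L_i^\omega$ lies in $\Delta^0_3$, so $h_i(2^\omega\setminus L_i^\omega)\in\Sigma^0_3$ by Lemma~\ref{lem:images}. The singleton $\{10^\omega\}$ is closed. Hence the complement of $A^\omega$ is a countable union of $\Sigma^0_3$ sets and lies in $\Sigma^0_3$, which gives $A^\omega\in\Pi^0_3$.

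The main point needing care is the fourth item of Lemma~\ref{lem:images}. Its stated justification (\emph{follows from the third}) is terse, so we spell it out. A $\Sigma^0_3$ set is $\bigcup_n B_n$ with $B_n\in\Pi^0_2$, and $h$ commutes with unions, so $h(\bigcup_n B_n)=\bigcup_n h(B_n)$. Each $h(B_n)$ is in $\Pi^0_2$ by the third item, so the union is in $\Sigma^0_3$. The third item itself relies on $h(2^\omega)$ being closed and on images of $\Sigma^0_2$ sets being $\Sigma^0_2$. The latter holds because $h$ is a continuous injection on a compact space, so images of closed sets are closed. With this checked, the rest is a routine transfer of the $\Delta^0_2$ argument.
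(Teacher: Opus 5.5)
Your proposal is correct and follows the paper's proof essentially step for step. The $\Sigma^0_3$ half combines $h_i(L_i^\omega)\in\Sigma^0_3$ (via Lemma~\ref{lem:images}) with $2^\omega\setminus C$ being open, and the $\Pi^0_3$ half uses the same decomposition $2^\omega\setminus A^\omega=\bigcup_i h_i(2^\omega\setminus L_i^\omega)\cup\{10^\omega\}$. Your extra justifications are accurate and only make explicit what the paper leaves implicit: that the images $h_i(2^\omega)$ are pairwise disjoint and miss $10^\omega$, and how the fourth item of the lemma follows from the third.
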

\begin{proof}
Again, we need to prove that this set and its complement belong to $\Sigma^0_3$. Let us recall Proposition~\ref{prop:form}: 
$$A^\omega = \bigcup_{i = 1}^{\infty} (h_i(L_i^\omega)) \cup (2^\omega \setminus (\bigcup_{i=1}^{\infty}(h_i(2^\omega))\cup \{10^\omega\})).$$ 

($\Sigma^0_3$) Each $L_i^\omega$ is a $\Sigma^0_3$ set since it belongs to $D^\downarrow_{2i+1}(\Pi^0_2)$. So, for each $i$, the set $h_i(L_i^\omega)$ belongs to $\Sigma^0_3$ by Lemma~\ref{lem:images}. The set $2^\omega \setminus (\bigcup_{i=1}^{\infty}(h_i(2^\omega))\cup \{10^\omega\})$ belongs to $\Sigma^0_2$, since it is the same set as in the proof of Proposition~\ref{prop:delta2}. So $A^\omega$ is a $\Sigma^0_3$ set.

($\Pi^0_3$) We have $2^\omega\setminus A^\omega = \bigcup_{i=1}^{\infty}(h_i(2^\omega \setminus L_i^\omega)) \cup \{10^\omega\}$. Each set $2^\omega \setminus L_i^\omega$ is a $\Delta^0_3$ set, and hence $h_i(2^\omega \setminus L_i^\omega)$ is a $\Sigma^0_3$ set by Lemma~\ref{lem:images}. So the whole set $2^\omega \setminus A^\omega$ is $\Sigma^0_3$, because the singleton $\{10^\omega\}$ belongs to $\Pi^0_1$.
\end{proof}

\begin{proposition}\label{prop:rank3}
$A^\omega \notin \bigcup_{n\in \omega} D^\downarrow_n(\Pi^0_2)$.
\end{proposition}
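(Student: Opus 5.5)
We argue by contradiction, mirroring the proof of Proposition~\ref{prop:rank2}. Suppose that $A^\omega \in D^\downarrow_k(\Pi^0_2)$ for some $k\in\omega$. Choose an index $i$ with $2i+1>k$ and consider the map $h_i\colon 2^\omega\to 2^\omega$. It is continuous, since each output block depends on only one input letter. By the closure of the levels $D^\downarrow_k(\Pi^0_2)$ under continuous preimages, $h_i^{-1}(A^\omega)\in D^\downarrow_k(\Pi^0_2)$.

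The key step is to identify $h_i^{-1}(A^\omega)$ with $L_i^\omega$ using Proposition~\ref{prop:form}, which still holds in the present setting since it concerns only the encodings. For $y\in 2^\omega$ we have $h_i(y)\in h_i(2^\omega)$, so $h_i(y)$ does not lie in the second part $2^\omega\setminus(\bigcup_j h_j(2^\omega)\cup\{10^\omega\})$ of the union.

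If $h_i(y)\in h_j(L_j^\omega)$ for some $j$, then $j=i$. Indeed, the codewords $\{h_n(0),h_n(1)\}_{n\ge1}$ form a prefix code, so the first block of $h_i(y)$ is $10^{m}1$ with $m\in\{2i-1,2i\}$. This block determines $j=i$, because $m$ lies in $\{2j-1,2j\}$ only for $j=i$. Moreover $h_i$ is injective, so $h_i(y)\in h_i(L_i^\omega)$ implies $y\in L_i^\omega$. Conversely, $y\in L_i^\omega$ gives $h_i(y)\in A^\omega$ directly. Hence $h_i^{-1}(A^\omega)=L_i^\omega$.

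Therefore $L_i^\omega\in D^\downarrow_k(\Pi^0_2)$. For finite $k$, this class equals $D_k(\Pi^0_2)$ and is contained in $D_{2i}(\Pi^0_2)=D^\downarrow_{2i}(\Pi^0_2)$, since $k\le 2i$ and finite levels increase (pad with copies of the empty set or of the whole space). But by Theorem~\ref{thm:FGL26}, $L_i^\omega$ is $D^\downarrow_{2i+1}(\Pi^0_2)$-complete. Every set in $D^\downarrow_{2i+1}(\Pi^0_2)$ would then reduce continuously to $L_i^\omega$, and so would lie in $D^\downarrow_{2i}(\Pi^0_2)$, collapsing the finite difference hierarchy over $\Pi^0_2$. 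This contradicts strictness.

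The main point needing care is this last step. Completeness has to be converted into non-membership in the lower level, using strictness of the finite hierarchy together with the monotonicity $D^\downarrow_k\subseteq D^\downarrow_{2i}$ for $k\le 2i$. The preimage identity itself is routine given the prefix-code structure.
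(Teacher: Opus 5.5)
Your proof is correct and follows the paper's route: the paper reruns the argument of Proposition~\ref{prop:rank2} by pulling back along the continuous map $h_i$ for a suitable $i$, using $h_i^{-1}(A^\omega)=L_i^\omega$ from Proposition~\ref{prop:form}, and contradicting the $D^\downarrow_{2i+1}(\Pi^0_2)$-completeness of $L_i^\omega$. Your version adds two details the paper leaves implicit: the explicit choice $2i+1>k$, and the final step using monotonicity $D^\downarrow_k(\Pi^0_2)\subseteq D^\downarrow_{2i}(\Pi^0_2)$ together with strictness of the finite hierarchy.
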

\begin{proof}
The same argument as in the proof of Proposition~\ref{prop:rank2} applies. Consider $h_i^{-1}(A^\omega)$ with an appropriate $i$.
\end{proof}

Finally, we obtain the following theorem, which answers the question posed in~\cite[Section~4]{FGL26}.

\begin{theorem}[Theorem~\ref{thm:main3}, restated]\label{thm:result3}
There exists a language $L \subseteq\{0,1\}^*$ such that $L^\omega \in \Delta^0_3$ and $L^\omega \notin \bigcup_{n\in \omega} D^\downarrow_n(\Pi^0_2)$.
\end{theorem}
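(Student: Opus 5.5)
The plan is to take $L := A$, where $A$ is built in Section~4 from the languages $L_i$ supplied by Theorem~\ref{thm:FGL26}, using the same encodings $h_i$ from Definition~\ref{def:code}. Each $h_i$ is applied to $L_i\subseteq 2^+$, so $A\subseteq\{0,1\}^*$. The statement then splits into two parts: Proposition~\ref{prop:delta3} gives $A^\omega\in\Delta^0_3$, and Proposition~\ref{prop:rank3} gives $A^\omega\notin\bigcup_n D^\downarrow_n(\Pi^0_2)$. Both have already been established, so the theorem follows by combining them.

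To be safe, I would check the details of Proposition~\ref{prop:rank3}, since its proof there is only sketched. Suppose $A^\omega\in D^\downarrow_k(\Pi^0_2)$ for some $k$. Pick $i$ with $2i+1>k$. The representation in Proposition~\ref{prop:form} still holds, because it only uses the prefix-code structure of the encodings. From it I would show $h_i^{-1}(A^\omega)=L_i^\omega$. The inclusion $\supseteq$ is immediate. For $\subseteq$, note that $h_i(y)$ lies in $h_i(2^\omega)$ and is not $10^\omega$. It cannot lie in $h_j(2^\omega)$ for $j\neq i$, since the blocks $10^m1$ have lengths specific to $i$. So $h_i(y)$ must fall in $h_i(L_i^\omega)$, and injectivity of $h_i$ gives $y\in L_i^\omega$.

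Next, $h_i\colon 2^\omega\to2^\omega$ is continuous, because each output block depends on a single input letter. The class $D^\downarrow_k(\Pi^0_2)$ is closed under continuous preimages, so $L_i^\omega\in D^\downarrow_k(\Pi^0_2)$. This class is contained in $D^\downarrow_{2i}(\Pi^0_2)$, which sits strictly below the $D^\downarrow_{2i+1}(\Pi^0_2)$-complete set $L_i^\omega$. By the strictness of the finite difference hierarchy, that is a contradiction.

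The main obstacle is the inclusion $D^\downarrow_k\subseteq D^\downarrow_{2i}$ for $k\le 2i$. I would get it by padding the decreasing family with extra copies of $\emptyset$ in a parity-respecting way, or by shifting with $2^\omega$. The other point to watch is the disjointness of the images $h_j(2^\omega)$ used in computing $h_i^{-1}(A^\omega)$.
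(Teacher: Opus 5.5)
Your proposal is correct and follows the paper's route: take $A$ built from the $L_i$ of Theorem~\ref{thm:FGL26}, get $A^\omega\in\Delta^0_3$ from Proposition~\ref{prop:delta3}, and get the rank bound by pulling back along $h_i$ with $2i+1>k$. You spell out that pullback in more detail than the paper does. The step you flag as the main obstacle is actually trivial: appending copies of $\emptyset$ to the end of a decreasing family leaves $D^\downarrow$ unchanged for either parity. This is because the convention $G_k=\emptyset$ already makes every extra piece $G_\eta\setminus G_{\eta+1}$ empty.
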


\end{document}